\documentclass[11pt]{amsart}

\author{Clifford Gilmore}
\address{Department of Mathematics and Statistics\\ P.O. Box 68\\ FI-00014 University of Helsinki\\ Finland.}
\email{clifford.gilmore@helsinki.fi}

\author{Eero Saksman}
\address{Department of Mathematics and Statistics\\ P.O. Box 68\\ FI-00014 University of Helsinki\\ Finland.}
\email{eero.saksman@helsinki.fi}

\author{Hans-Olav Tylli}
\address{Department of Mathematics and Statistics\\ P.O. Box 68\\ FI-00014 University of Helsinki\\ Finland.}
\email{hans-olav.tylli@helsinki.fi}

\thanks{The first and second authors have been supported by the Academy of Finland via the Centre of Excellence in Analysis and Dynamics Research (project no. 271983).  The first author has also been supported by the Magnus Ehrnrooth Foundation.}

\title{Hypercyclicity properties of commutator maps}
\date{\today}
\keywords{Hypercyclicity, commutator operators, multipliers, supercyclicity, linear dynamics}
\subjclass[2000]{Primary 47A16; Secondary 47B47}

\usepackage[UKenglish]{babel}

% amsfonts, amsthm and amsmath are automatically loaded by the amsart document class
\usepackage{amssymb,mathrsfs,mathtools}

\usepackage[all]{xy} 	% for commutative diagrams

\usepackage{enumerate}

\newtheorem{thm}{Theorem}[section]

\newtheorem{cor}[thm]{Corollary}
\newtheorem{prop}[thm]{Proposition}

% the subsequent newtheorem definitions will be in the 'definition' style
\theoremstyle{definition}
\newtheorem{eg}[thm]{Example} 
\newtheorem{rmk}[thm]{Remark}

\numberwithin{equation}{section}		% equation numbers prefixed with section number

% the trace operator
\newcommand{\tr}[1]{\mathrm{tr}\!\left( #1 \right)}

% closed span with argument and pre-brace space repressed 
\newcommand{\cspn}[1]{\overline{\mathrm{span}}\!\left\lbrace #1 \right\rbrace} 

% the range of a map
\DeclareMathOperator{\ran}{ran}

\begin{document}

\begin{abstract}
We investigate the hypercyclic properties of commutator maps acting on separable ideals of operators.  
As the main result we prove the commutator map induced by scalar multiples of the backward shift operator fails to be hypercyclic on the space of compact operators on $\ell^2$.
We also establish some necessary conditions  which identify large classes of operators that do not induce hypercyclic commutator maps.
\end{abstract}

\maketitle

\section{Introduction}

Let $X$ be a Banach space and $\mathscr{L}(X)$ the space of bounded linear 
operators on $X$. The \emph{commutator operator} 
$\Delta_T \colon \mathscr{L}(X) \to \mathscr{L}(X)$ induced by a fixed bounded linear operator
$T \in \mathscr{L}(X)$ is defined as
\begin{equation*}
S \mapsto \Delta_T(S) =  TS - ST = L_T(S) - R_T(S)
\end{equation*}
where $S \in \mathscr{L}(X)$ and $L_T, R_T \colon \mathscr{L}(X) \to \mathscr{L}(X)$ are, respectively, the left and right multiplication operators.

Recall for a separable Banach space $X$ that the operator $U \in \mathscr{L}(X)$ is 
\emph{hypercyclic} if there exists a vector $x \in X$ (said to be a hypercyclic vector for $U$)
such that its orbit under $U$ is dense in $X$, that is
\begin{equation*}
\overline{\{U^n x : n \geq 0\}} = X.
\end{equation*}
The purpose of this paper is to initiate the investigation of hypercyclicity properties of commutator maps $\Delta_T$ restricted to separable Banach ideals of operators,
which  turns out to be quite a subtle question.

The motivation for this study is at least twofold. Firstly,  Bonet et al.~\cite{BMP04} characterised the hypercyclicity of the left and right multiplication operators 
on such Banach ideals. Subsequently Bonilla and Grosse-Erdmann~\cite{BGE12} identified a sufficient condition for the frequent hypercyclicity of the left multiplier.  
This raises the question of the hypercyclicity properties of more complicated operators built up from the basic multipliers $L_T$ and $R_T$.
The next natural class of operators to consider are the commutator maps
$\Delta_T =  L_T - R_T$,  which have been extensively studied from various perspectives.  

Secondly, it is known that  on non-separable spaces such as $\mathscr{L}(\ell^2)$ 
the closure of the range $\overline{\ran} \left(\Delta_T \right)$ of the commutator maps  $\Delta_{T}$
is quite small.  
For instance, Stampfli~\cite[p. 519]{Sta73} showed that the quotient 
\begin{equation} \label{stampfli}
\mathscr{L}(\ell^2)/\overline{\ran} \left(\Delta_T \right)
\end{equation}
is not even separable for
any $T \in \mathscr{L}(\ell^2)$ and this fact was later extended to $X = \ell^p$, for $1 < p < \infty$,
in~\cite{ST98}.
Furthermore, it is well known that the identity map
$I_X \notin \ran \left(\Delta_T \right)$ for any  $T \in \mathscr{L}(X)$ when $X$ is infinite-dimensional.

On the other hand, by a celebrated example of Anderson~\cite{And73a} there also exist operators  $T \in \mathscr{L}(\ell^2)$ such that  $I_{\ell^2} \in \overline{\ran} \left(\Delta_T \right)$.
Furthermore, restrictions of commutator maps to separable ideals behave quite 
differently from  \eqref{stampfli}, for instance for the backward shift $B \in \mathscr{L}(\ell^2)$  
the restricted map $\Delta_B \colon \mathscr{K}(\ell^2) \to \mathscr{K}(\ell^2)$ has dense range,
where $\mathscr{K}(\ell^2)$ is the ideal of the compact operators 
on $\ell^2$. 
This provides further motivation to pursue this study.

In Sections \ref{sec:NonHcOpers} and \ref{sec:NonHcOpersHilbert} we first isolate 
large classes  of operators that do not induce hypercyclic commutator maps on separable ideals 
of $\mathscr{L}(X)$.  
The backward shift $B$ on $\ell^2$ is an obvious candidate for inducing a hypercyclic commutator map, 
but in Section \ref{sec:commBwNonHc}
we prove as our main result that $\Delta_{cB}$ is not hypercyclic 
on $\mathscr{K}(\ell^2)$ for any constant $c$. In fact, the same is also true for 
$\Delta_{p(B)}$, where $p$ is  any analytic polynomial.

\section{Background and Setting}  \label{sec:BgMultipliers}

Since the space of bounded linear operators  is non-separable under the operator norm topology for classical Banach spaces $X$, our setting will be that of separable ideals of $\mathscr{L}(X)$.
It is convenient to say that
$\left(J, \lVert \: \cdot \:  \rVert_J \right)$ is a \emph{Banach ideal} of $\mathscr{L}(X)$ if
\begin{enumerate}[(i)]
\item $J \subset \mathscr{L}(X)$ is a linear subspace,

\item the norm $\lVert \: \cdot \: \rVert_J$ is complete in $J$ and $\lVert S \rVert \leq  \lVert S \rVert_J$ for all $S \in J$, %\label{defn:banIdealii}

\item $BSA \in J$ and $\lVert BSA \rVert_J \leq   \lVert B \rVert  \lVert A \rVert  \lVert S \rVert_J$, for $A, B \in \mathscr{L}(X)$ and $S \in J$, %\label{defn:banIdealiii}

\item the rank one operators $x^* \otimes x \in J$ and $\lVert x^* \otimes x \rVert_J =   \lVert x^* \rVert   \lVert  x \rVert$ for all $x^* \in X^*$ and $x \in X$. %\label{defn:banIdealiv}
\end{enumerate}

Classical examples of Banach ideals are the space of nuclear operators $\left( \mathscr{N}(X), \lVert \: \cdot \: \rVert_N \right)$ with the nuclear norm and the spaces of 
approximable operators $\mathscr{A}(X)$ as well as compact operators $\mathscr{K}(X)$ 
under the operator norm.  
When $X$ is a Hilbert space the spaces of Schatten $p$-class operators $\left( C_p, \lVert \: \cdot \: \rVert_p \right)$ with the Schatten norm, for $1 \le p < \infty$,
are important instances of Banach ideals.

In the setting of Banach ideals, Bonet et al.~\cite{BMP04} characterised the hypercyclicity of the left and right multipliers using tensor techniques developed in \cite{MP03}. 
For a separable Banach ideal $J \subset \mathscr{L}(X)$ containing the finite rank operators as a dense subset  they showed that  

\begin{enumerate}[1.]
\item $L_T$ is hypercyclic on $J$ if and only if $T \in \mathscr{L}(X)$ satisfies the Hypercyclicity Criterion,

\item $R_T$ is hypercyclic on $J$ if and only if $T^*$ satisfies the Hypercyclicity Criterion on the dual  $X^*$.
\end{enumerate}

Instances  of  Banach ideals $J$ to which the above results apply include $\left( \mathscr{N}(X), \lVert \: \cdot \: \rVert_N \right)$ and $\mathscr{A}(X)$ 
when $X^*$ is separable, as well as  
$\mathscr{K}(X)$ when $X$ possesses the approximation property and $X^*$ is separable.  
Further details on these spaces and  the approximation property can be found for instance
 in \cite{Pie80} or \cite{Rya02}.
Although not explicitly stated in~\cite{BMP04}, we note that their results also 
yield sufficient conditions for the hypercyclicity of the two-sided multipliers  $L_T R_U$, for $T, U \in \mathscr{L}(X)$. (To see this  identify $L_T R_U$ with its tensor representation $U^* \otimes T$, whence  hypercyclicity follows directly from the sufficient conditions identified in \cite{MP03}.)

The area of linear dynamics contains an extensive body of work and we refer to ~\cite{BM09} and~\cite{GEP11} for expositions of the fundamental  results related to hypercyclicity.
We briefly recall here that the aforementioned Hypercyclicity Criterion is a sufficient condition for hypercyclicity
 which was initially demonstrated by Kitai~\cite{Kit82} and later independently rediscovered 
 by Gethner and Shapiro~\cite{GS87} .
We say that $T \in \mathscr{L}(X)$ satisfies the Hypercyclicity Criterion if there exist dense subsets $X_0$, $Y_0 \subset X$, an increasing sequence $(n_k)$ of positive integers and maps $S_{n_k}\colon Y_0 \to X$, $k \geq 1$, such that for any $x \in X_0$, $y \in Y_0$ one has
\begin{enumerate}[(i)]
\item $T^{n_k} x \to 0$,
\item $S_{n_k}(y) \to 0$, 
\item $T^{n_k}S_{n_k}(y) \to y$ 
\end{enumerate}
as $k \to \infty$.

We note in the literature there are also analogous hypercyclicity results on spaces of operators endowed with weaker topologies. For instance, hypercyclicity of the left  multiplier $L_T$ acting on
$\mathscr{L}(X)$ endowed with the strong operator topology  was 
characterised by Chan and Taylor~\cite{Cha99,CT01}.  Subsequently this was extended to supercyclicity by Montes-Rodr{\'i}guez and Romero-Moreno~\cite{MRRM02}.
Petersson~\cite{Pet07}  gave sufficient conditions for the hypercyclicity of the specific two-sided multipliers $L_{T^*}R_{T}$  on the space of self-adjoint operators 
on a Hilbert space under the  topology of uniform convergence on compact sets. 
Later Gupta and Mundayadan~\cite{GM15}  extended~\cite{Pet07} to the supercyclic case.
It is clearly natural to raise questions about hypercyclic properties of commutator maps 
$\Delta_T$ on $\mathscr{L}(X)$ with respect to weaker topologies however this alternative will not be explored in this paper.

\section{Classes of Non-Hypercyclic Commutator Maps}  \label{sec:NonHcOpers}

In this section we record three general observations which yield that 
commutator maps $\Delta_T$ are not hypercyclic for large classes of operators.
The first two observations are related to the well known fact that the adjoint 
of a hypercyclic operator has no eigenvalues (cf.~\cite[Proposition 1.17]{BM09}). 

Recall that for any Banach space $X$, the ideal of  nuclear operators $\mathscr{N}(X)$  is the smallest Banach ideal of
$\mathscr{L}(X)$  (cf. \cite[Theorem 6.7.2]{Pie80}).
Our first result implies that $\mathscr{N}(X)$ does not support any hypercyclic commutator
maps  under general conditions on $X$. 
For this recall  if $X^*$ has the approximation property then
$\mathscr{N}(X)^* = \mathscr{L}(X^*)$ in the trace duality 
\begin{equation}\label{dual}
\langle S, U \rangle = \tr{S^*U}, \quad U \in \mathscr{L}(X^*), \; S \in \mathscr{N}(X)
\end{equation}
(cf.  \cite[Theorem 1.e.4, Theorem 1.e.7]{LT77} or \cite[Theorem 10.3.2]{Pie80}).  
Here $\tr{S^*U} = \sum_{j=1}^\infty Ux_j^*(x_j)$ is the trace of $S^*U$,
for $S = \sum_{j=1}^\infty x_j^* \otimes x_j \in \mathscr{N}(X)$ and 
$U \in \mathscr{L}(X^*)$. This trace duality  is of course well known for $X = \ell^2$. 

Our first observation actually concerns the non-cyclicity of certain commutator maps. 
Recall for a separable Banach space $X$ that  $U \in \mathscr{L}(X)$ is a \emph{cyclic} operator if there exists $x \in X$ such that the linear span of its orbit under $U$ is dense in X, that is
\begin{equation}  \label{cyclic}
\cspn{U^n x : n \geq 0} = X.
\end{equation}
Clearly any hypercyclic operator is cyclic,
 but the cyclic operators form a much larger class.  For instance the forward shift 
 $S \in \mathscr{L}\left( \ell^2(\mathbb{N}) \right)$ is cyclic but  not hypercyclic. 

\begin{prop}  \label{thm:noHcNucs}
Let $X$ be any Banach space such that $X^*$ has the approximation property. Then the space of nuclear operators $\mathscr{N}(X)$ does not support any cyclic (nor any hypercyclic) 
commutator maps.
\end{prop}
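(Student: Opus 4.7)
The plan is to use the trace duality \eqref{dual} to identify the Banach-space adjoint of $\Delta_T$ acting on $\mathscr{N}(X)$, and then rule out cyclicity by exhibiting an eigenspace of the adjoint of dimension at least two.

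First, I would verify via the cyclic property of the trace that, under the identification $\mathscr{N}(X)^* = \mathscr{L}(X^*)$ afforded by \eqref{dual} (which is where the approximation property of $X^*$ is invoked), the adjoint of $\Delta_T$ is precisely the commutator map $\Delta_{T^*}$ on $\mathscr{L}(X^*)$. This amounts to a short trace manipulation of the form $\tr{(TS-ST)^*U} = \tr{S^*(T^*U - UT^*)}$ and is not the main content of the argument.

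The key dynamical input is the standard fact that if $U \in \mathscr{L}(Y)$ is cyclic on a separable Banach space $Y$ with cyclic vector $y$, then for every scalar $\lambda$ the eigenspace $\ker(U^* - \lambda) \subseteq Y^*$ is at most one-dimensional: any $\phi \in Y^*$ with $U^*\phi = \lambda \phi$ is determined by $\phi(y)$ via $\phi(U^n y) = \lambda^n \phi(y)$ together with the density of the orbit span. Applying this to $U = \Delta_T$ and $\lambda = 0$, cyclicity would force $\ker\Delta_{T^*}$, which is exactly the commutant of $T^*$ in $\mathscr{L}(X^*)$, to be at most one-dimensional.

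The desired contradiction is then immediate: the commutant of $T^*$ always contains the pair $\{I_{X^*}, T^*\}$, and these are linearly independent unless $T$ is a scalar multiple of $I_X$. In that remaining case $\Delta_T \equiv 0$, which is clearly non-cyclic on the non-trivial ideal $\mathscr{N}(X)$. Since every hypercyclic operator is cyclic, this also rules out the existence of hypercyclic commutator maps on $\mathscr{N}(X)$. I do not foresee a genuine obstacle: the trace-duality identification of the adjoint and the cyclic-eigenspace lemma are both standard, and the only mild subtlety is treating the scalar case separately.
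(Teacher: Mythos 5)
Your proposal is correct and follows essentially the same route as the paper: identify $\Delta_T^{*} = \Delta_{T^*}$ on $\mathscr{L}(X^*)$ via trace duality, observe that $\{I_{X^*}, T^*\}\subset\ker\Delta_{T^*}$ is linearly independent whenever $T$ is not a scalar multiple of $I_X$, and handle the scalar case trivially. The only cosmetic difference is that you invoke the general bound $\dim\ker(U^*-\lambda)\le 1$ for cyclic $U$, while the paper uses the equivalent $\lambda=0$ formulation that the closed range of a cyclic operator has codimension at most one.
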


\begin{proof}
We begin by recalling a simple necessary condition for cyclicity.
Suppose that $U \in \mathscr{L}(X)$ satisfies $\dim \left( \ker(U^*) \right) \geq 2$, that is,
there are linearly independent functionals $\{x_1^*, x_2^*\} \subset X^*$
for which ${x_j^*}_{| \ran(U)} = 0$ for $j = 1, 2$. In particular, this means that 
the codimension of the closure 
$\overline{\ran} \left(U \right)$ in $X$ is at least $2$. Then it follows that  $U$ cannot be a cyclic operator 
on $X$, since  \eqref{cyclic} implies that the codimension of $\overline{\ran} \left(U \right)$ 
in $X$ is at most $1$.

Let $T \in \mathscr{L}(X)$ be arbitrary.  Recall next that the adjoint of $\Delta_T$ is
\begin{equation*}
\left(\Delta_T \colon \mathscr{N}(X) \to \mathscr{N}(X) \right)^* = 
\Delta_{T^*}  \colon \mathscr{L}(X^*) \to \mathscr{L}(X^*)
\end{equation*}
in the trace duality of  \eqref{dual}. 
In fact, for any $x \in X$ and $x^* \in X^*$ we have that 
\begin{align*}
\langle x^* \otimes x,  & \, \Delta_T^*(U) \rangle = \langle x^* \otimes Tx -  T^*x^* \otimes x, \, U \rangle \\
&= \tr{U^*Tx \otimes  x^* - U^*x \otimes T^*x^*}  = \langle Ux^*, \, Tx\rangle - \langle T^*x^*, \, U^*x \rangle \\
& = \tr{(x \otimes x^*)(T^*U-UT^*)} = \langle x^* \otimes x, \, \Delta_{T^*}(U)\rangle.
\end{align*}
If $T = \lambda I_X$ for some $\lambda \in \mathbb C$, then obviously
$\Delta_T = 0$ is not cyclic on $\mathscr{N}(X)$.
On the other hand,  if $T \neq \lambda I_X$ for all $\lambda \in \mathbb C$,
then $\{I_{X^*}, T^*\}$ is a linearly independent set of 
$\ker(\Delta_{T^*}) = \ran \left(\Delta_T \right)^{\perp}$ in $ \mathscr{L}(X^*)$. 
Consequently the above condition implies that 
$\Delta_T$ can not be cyclic $\mathscr{N}(X) \to \mathscr{N}(X)$.
% Clearly $I_{X^*} \in Ker(\Delta_{T^*})$ so that $0$ is an eigenvalue of the adjoint 
% $(\Delta_{T})^*$ on $\mathscr{L}(X^*)$. Since the point spectrum 
% $\sigma_\mathrm{p}((\Delta_{T})^*)$ is nonempty
%it follows that $\Delta_T$ is not hypercyclic on $\mathscr{N}(X)$.
\end{proof}

Next we record a simple spectral condition which provides wide classes of non-hypercyclic commutator maps.

\begin{prop} \label{prop:nonEmptySpectra}
Let $X$ be a Banach space and $T \in \mathscr{L}(X)$.  If the point spectra of $T$ and $T^*$ are both nonempty then the commutator maps  $\Delta_T$ and 
$\Delta_{T}^*$ are not hypercyclic on $J$, respectively $J^*$, for any Banach ideal
$J \subset \mathscr{L}(X)$.
\end{prop}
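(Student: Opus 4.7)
The natural strategy is to invoke the classical obstruction \cite[Proposition 1.17]{BM09}: the adjoint of a hypercyclic operator has empty point spectrum. To prove $\Delta_T$ is not hypercyclic on $J$ I would exhibit a nonzero eigenvector for $\Delta_T^* \colon J^* \to J^*$, and for the statement about $\Delta_T^*$ on $J^*$ I would exhibit a nonzero eigenvector for $\Delta_T^{**}$ (equivalently, for $\Delta_T$ itself, via the canonical embedding $J \hookrightarrow J^{**}$).

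Fix nonzero eigenvectors $x_0 \in X$ and $y_0^* \in X^*$ with $Tx_0 = \mu x_0$ and $T^*y_0^* = \nu y_0^*$. The key construction is a rank-one analogue of the trace functional,
\[
\phi(S) := y_0^*(Sx_0), \qquad S \in J.
\]
Property (ii) of the Banach ideal norm gives $|\phi(S)| \leq \lVert y_0^*\rVert \lVert x_0 \rVert \lVert S \rVert_J$, so $\phi \in J^*$. A direct computation,
\[
\phi(\Delta_T S) \, = \, y_0^*(TSx_0) - y_0^*(STx_0) \, = \, \nu y_0^*(Sx_0) - \mu y_0^*(Sx_0) \, = \, (\nu-\mu)\phi(S),
\]
shows that $\Delta_T^*\phi = (\nu-\mu)\phi$. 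To ensure $\phi \not\equiv 0$, pick $v \in X$ with $y_0^*(v)\neq 0$ and $w^* \in X^*$ with $w^*(x_0)\neq 0$; by property (iv) of a Banach ideal, $w^*\otimes v \in J$, and $\phi(w^*\otimes v) = w^*(x_0)\, y_0^*(v) \neq 0$. The adjoint obstruction then rules out hypercyclicity of $\Delta_T$ on $J$.

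For $\Delta_T^*$, I would reverse the roles: the same data yield a rank-one eigenvector of $\Delta_T$ itself, namely $S_0 := y_0^* \otimes x_0 \in J$, since $TS_0 = \mu S_0$ and $S_0 T = \nu S_0$, so $\Delta_T(S_0) = (\mu-\nu)S_0$. Identifying $S_0$ with its canonical image in $J^{**}$, the bitranspose $\Delta_T^{**}$ inherits this eigenvector with the same eigenvalue, and applying the adjoint obstruction to $\Delta_T^* \colon J^* \to J^*$ gives the second conclusion.

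There is no genuine obstacle here; the only subtlety is that the most obvious test element $S = y_0^* \otimes x_0$ yields $\phi(S) = y_0^*(x_0)^2$, which can vanish when eigenvectors of $T$ and $T^*$ happen to be biorthogonal, so one must choose the separating pair $(v,w^*)$ above to guarantee $\phi\neq 0$.
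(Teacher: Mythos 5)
Your proof is correct and follows essentially the same route as the paper: constructing the functional $\phi(S) = y_0^*(Sx_0) \in J^*$ as an eigenvector of $\Delta_T^*$, and using the rank-one operator $y_0^* \otimes x_0 \in J \hookrightarrow J^{**}$ as an eigenvector of $\Delta_T^{**}$. The only difference is that you explicitly verify $\phi \not\equiv 0$ by testing on a suitable $w^* \otimes v$ (a point the paper leaves implicit), which is a sensible bit of extra care.
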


\begin{proof}
Let $J \subset \mathscr{L}(X)$ be an arbitrary Banach ideal.
By assumption $Tx = \alpha x$ and $T^* x^* = \beta x^*$ for respective eigenvalues $\alpha, \beta \in \mathbb{C}$ corresponding to the normalized eigenvectors $x \in X$ and $x^* \in X^*$.  

We will first observe that the adjoint $\Delta^*_T \colon J^*\to J^*$ has a nonempty point spectrum.
In fact, define the continuous linear functional $\varphi \in J^*$ by $\varphi(U) = \langle x^*, Ux \rangle$, for $U \in J$. Note that $\varphi$ is bounded as
$\vert \varphi(U)\vert \le \Vert U\Vert_J$. 
For any $U \in J$ notice that
\begin{align*}
\langle \Delta^*_T(\varphi), \, U \rangle &= \langle \varphi,  \, TU - UT \rangle = 
\langle T^*x^*,  \, Ux \rangle - \langle x^*,  \, UTx \rangle \\
& = \beta \langle x^*,  \, Ux \rangle - \alpha \langle x^*,  \, Ux \rangle 
=  \left(\beta - \alpha\right) \langle \varphi,  \, U \rangle.
\end{align*}
Hence $\beta - \alpha$ is an eigenvalue of $\Delta_T^*$ on $J^*$ and $\Delta_T$ 
is not hypercyclic.

Moreover, apply $\left(\Delta_T \right)^{**} \colon J^{**} \to J^{**}$ to  
$x^* \otimes x \in J \subset J^{**}$, whence
\begin{equation*}
(\Delta_T)^{**}(x^* \otimes x) = x^* \otimes Tx - T^*x^* \otimes x 
= \left(\alpha - \beta \right) (x^* \otimes x).
\end{equation*}
In particular,  $\alpha - \beta$ is an eigenvalue of $(\Delta_T)^{**}$
 and hence $\Delta^*_T$ is not hypercyclic on $J^*$.
\end{proof}

To illustrate the above proposition consider a Hilbert space $H$ with orthonormal basis $(e_j)$ and
 the diagonal operator $D \in \mathscr{L}(H)$ defined as $De_j = \alpha_j e_j$, 
 where $(\alpha_j)$ is a bounded sequence of scalars and $j \geq 1$. Then 
the commutator map $\Delta_D$ is not hypercyclic on any separable Banach ideal $J \subset \mathscr{L}(H)$ and the same applies to $\Delta_D^*$ on the dual space $J^*$.

Our third observation implies that $\Delta_T$ is never hypercyclic
if $T$ is a  small operator, for instance if $T$ is compact. In fact,  we show more generally that  commutator maps  induced by Riesz operators are  never hypercyclic 
on any Banach ideal. This will be based on the result of Kitai \cite{Kit82} that every connected component of the spectrum of a hypercyclic operator intersects the unit circle 
(cf. \cite[Theorem 1.18]{BM09}). Recall that $T \in \mathscr{L}(X)$ is a \emph{Riesz} operator if its essential spectrum  $\sigma_{\mathrm{e}}(T) = \{ 0 \}$  
and that Riesz operators  are never hypercyclic~\cite[p.\ 160]{GEP11}.
The spectrum of a Riesz operator $T$ has the form
\begin{equation}  \label{eq:specRiesz}
\sigma(T) = \{ 0\} \cup \{ \lambda_n : n \geq 1 \}
\end{equation}
where $\{ \lambda_n : n \geq 1 \}$ is a discrete, at most countable, possibly empty set containing eigenvalues of finite multiplicity. 

The spectrum of the commutator map $\Delta_T$ on $\mathscr{L}(X)$ was computed 
by Lumer and Rosenblum~\cite{LR59}. In fact, the spectrum of $\Delta_T$ restricted to 
any Banach ideal $J \subset \mathscr{L}(X)$ satisfies the same formula
\begin{equation}  \label{eq:specCommI}
\sigma_J(\Delta_T) = \sigma(T) - \sigma(T) = \left\lbrace \lambda - \mu: \lambda, \mu \in \sigma(T) \right\rbrace
\end{equation}
for any $T \in \mathscr{L}(X)$,  where $\sigma_J(\Delta_T)$ denotes the spectrum of $\Delta_T$ 
considered as an operator $J  \to J$. For an argument of  \eqref{eq:specCommI} 
that applies to any Banach ideal $J$
see for instance the survey \cite[Theorem 3.12]{ST06}.

\begin{thm}  \label{thm:CommRieszNotHc}
Let $X$ be a Banach space and $J \subset \mathscr{L}(X)$ be a Banach ideal.  If 
$T \in \mathscr{L}(X)$ is a Riesz operator then the induced commutator map $\Delta_T$ is not hypercyclic on $J$.
\end{thm}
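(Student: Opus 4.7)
The proof will combine the two ingredients already flagged just before the statement: the Lumer--Rosenblum spectral formula \eqref{eq:specCommI} and Kitai's theorem that every connected component of the spectrum of a hypercyclic operator meets the unit circle. My plan is to show that $\sigma_J(\Delta_T)$ admits $\{0\}$ as a connected component, which, failing to meet $\mathbb{T}$, will force $\Delta_T$ to be non-hypercyclic.

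First I would invoke \eqref{eq:specCommI} to write $\sigma_J(\Delta_T) = \sigma(T) - \sigma(T)$. Since $T$ is Riesz, \eqref{eq:specRiesz} shows that $\sigma(T)$ is at most countable. The difference set $\sigma(T) - \sigma(T)$ is therefore countable as well, and it is compact as the continuous image of the compact set $\sigma(T) \times \sigma(T)$ under subtraction. Note that $0$ belongs to this set because $\sigma(T)$ is nonempty.

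The crux---and, I expect, the only nontrivial point---is the purely topological observation that a countable compact subset $K$ of $\mathbb{C}$ is totally disconnected, so that every connected component of $K$ is a singleton. This holds because any connected subset of $\mathbb{C}$ containing two distinct points $a \neq b$ maps, under the continuous function $z \mapsto |z-a|$, to a connected subset of $\mathbb{R}$ containing both $0$ and $|b-a|$; by the intermediate value theorem this image contains the interval $[0,|b-a|]$ and is hence uncountable, so the original set must be uncountable too.

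Applying this with $K = \sigma_J(\Delta_T)$, the singleton $\{0\}$ is a connected component of $\sigma_J(\Delta_T)$. Since $\{0\} \cap \mathbb{T} = \emptyset$, Kitai's theorem precludes the hypercyclicity of $\Delta_T$ on $J$, which is what was to be proved.
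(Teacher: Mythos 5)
Your proof is correct and follows the same strategy as the paper: combine the Lumer--Rosenblum spectral formula \eqref{eq:specCommI} with Kitai's criterion by showing $\{0\}$ is a connected component of $\sigma_J(\Delta_T)$ that misses the unit circle. In fact your treatment of the key topological step is \emph{more} careful than the paper's: the paper asserts that $\sigma_J(\Delta_T)$ is a \emph{discrete} set, but this need not be true (take $\lambda_n = 1/n$, so that $1/n - 1/(n+1) \to 0$ and $0$ is not isolated in $\sigma(T)-\sigma(T)$). What is true, and what actually matters, is exactly what you prove: a countable compact subset of $\mathbb{C}$ is totally disconnected, hence every connected component is a singleton, hence $\{0\}$ is one. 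Your intermediate-value-theorem argument for total disconnectedness is clean and correct, and it repairs the one imprecise claim in the published proof.
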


\begin{proof}
It follows from  \eqref{eq:specRiesz} and \eqref{eq:specCommI} that the spectrum of $\Delta_T$ on $J$ is given by
\begin{equation*}
\sigma_J (\Delta_T)  = \left\lbrace \lambda_n - \lambda_m : \lambda_n, \lambda_m \in \sigma(T), \; n, m \ge 0  \right\rbrace
\end{equation*}
where we define $\lambda_0 \coloneqq 0$. Here $\sigma_J(\Delta_T)$ is a closed and compact set 
which is at most countable. Since the map $(z,w) \mapsto z - w$ is continuous it follows that
 $\sigma_J(\Delta_T)$ is a discrete set containing the singleton $\{ 0 \}$ as a
connected component. Consequently the result of  Kitai~\cite{Kit82}  yields
that $\Delta_T$ is not hypercyclic on $J$.
\end{proof}

For the next corollary recall  that compact operators are examples of Riesz operators.

\begin{cor}\label{cor:comp}
Let $X$ be a Banach space and  $J \subset \mathscr{L}(X)$ be a separable Banach ideal.  
If $T \in \mathscr{L}(X)$ is a compact operator then the induced commutator map 
$\Delta_T$   is not hypercyclic on $J$.
\end{cor}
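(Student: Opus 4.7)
The plan is to derive the corollary as an immediate consequence of Theorem~\ref{thm:CommRieszNotHc}, exploiting the classical fact already flagged in the sentence preceding the statement: every compact operator on a Banach space is a Riesz operator. Thus the entire argument reduces to confirming this inclusion and then invoking the theorem.

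First I would recall the definition of Riesz operator used above, namely that $T \in \mathscr{L}(X)$ is Riesz precisely when its essential spectrum $\sigma_{\mathrm{e}}(T) = \{0\}$. For a compact $T \in \mathscr{K}(X)$, the coset of $T$ in the Calkin algebra $\mathscr{L}(X)/\mathscr{K}(X)$ equals the zero element, whence its spectrum in that quotient — i.e.\ $\sigma_{\mathrm{e}}(T)$ — collapses to $\{0\}$. Consequently $T$ is Riesz, and the spectral description \eqref{eq:specRiesz} applies (this is also the content of the classical Riesz--Schauder theory of compact operators, which could be cited directly as an alternative route).

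With the reduction in place, I would simply apply Theorem~\ref{thm:CommRieszNotHc} to $T$. That result holds for arbitrary Banach ideals $J \subset \mathscr{L}(X)$, so it applies in particular to the separable ideal under consideration, and yields that $\Delta_T$ fails to be hypercyclic on $J$. The separability hypothesis in the corollary plays no role in the proof beyond being the standing framework in which hypercyclicity is meaningful.

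The main obstacle is essentially nonexistent: this is a genuine corollary, recorded to emphasise the most familiar special case of Theorem~\ref{thm:CommRieszNotHc}. The only small subtlety worth flagging is to make sure one cites the correct notion of Riesz operator (the essential-spectrum definition used in the theorem), rather than an ostensibly different but equivalent characterisation via the Fredholm--Riesz decomposition; both lead immediately from compactness to the conclusion.
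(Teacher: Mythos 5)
Your proof is correct and matches the paper's intent exactly: the paper records this corollary immediately after Theorem~\ref{thm:CommRieszNotHc} with no separate proof, relying on the preceding remark that compact operators are Riesz operators. Your justification of that inclusion via the Calkin algebra (or Riesz--Schauder theory) and the direct application of the theorem is precisely the intended argument.
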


Note that if $T \in \mathscr{K}(X)$ then $\Delta_T = L_T - R_T$ is typically not a compact  
operator $\mathscr{K}(X) \to \mathscr{K}(X)$. 
In fact, if $X$ has the approximation property and $X$ is a reflexive Banach space,
then $\Delta_T = L_T - R_T$ is a 
weakly compact operator $\mathscr{K}(X) \to \mathscr{K}(X)$ whenever $T \in \mathscr{K}(X)$ (cf. \cite[Proposition 2.5]{ST06}).
We note that it is not difficult to check that $\Delta_T$ is not compact on $\mathscr{K}(\ell^2)$, where $T$ is the rank one operator $e_1 \otimes e_1$ on $\ell^2$.

To specify another standard class of examples contained in Theorem \ref{thm:CommRieszNotHc}
recall that the Banach space $X$ has the Dunford-Pettis property (DPP) if any weakly compact operator $S \colon X \to Y$, where $Y$ is an arbitrary Banach space, maps weak-null sequences of $X$ to norm-null sequences. In particular,
if $X$ has the DPP and $S \in \mathscr{L}(X)$ is weakly compact, then $S^2  \in \mathscr{K}(X)$
and $S$ is a Riesz operator. We refer to the survey \cite{Die80} for examples and properties of Banach spaces with the DPP. 

\begin{cor}\label{cor:DPP}
Let $X$ be a Banach space possessing the DPP and  $J \subset \mathscr{L}(X)$ be a separable Banach ideal.  
If $T \in \mathscr{L}(X)$ is a weakly compact operator then the commutator map 
$\Delta_T$  is not hypercyclic on $J$.
\end{cor}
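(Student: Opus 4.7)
The plan is to reduce Corollary \ref{cor:DPP} directly to Theorem \ref{thm:CommRieszNotHc} by verifying that the hypotheses force $T$ to be a Riesz operator. First I would invoke the standard fact recalled in the paragraph immediately preceding the corollary: since $X$ has the DPP and $T \in \mathscr{L}(X)$ is weakly compact, the composition $T^2$ lies in $\mathscr{K}(X)$. The underlying reason is that $T$ maps the weakly compact set $T(B_X)$ into a norm-compact set, because weakly convergent sequences in $T(B_X)$ are mapped to norm-convergent ones by the weakly compact operator $T$ when $X$ has the DPP.

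From $T^2 \in \mathscr{K}(X)$ I would then conclude that $T$ is a Riesz operator. This follows from the polynomial spectral mapping theorem applied in the Calkin algebra $\mathscr{L}(X)/\mathscr{K}(X)$: since the class of $T^2$ in the Calkin algebra is zero, $\sigma_{\mathrm{e}}(T)^2 = \sigma_{\mathrm{e}}(T^2) = \{0\}$, which forces $\sigma_{\mathrm{e}}(T) = \{0\}$, that is, $T$ is Riesz.

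Once $T$ is identified as a Riesz operator, Theorem \ref{thm:CommRieszNotHc} applies to every Banach ideal $J \subset \mathscr{L}(X)$, and in particular to the separable Banach ideal $J$ appearing in the statement. This yields at once that $\Delta_T$ is not hypercyclic on $J$, completing the argument.

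There is essentially no obstacle to overcome, since the genuine work has already been carried out in Theorem \ref{thm:CommRieszNotHc}; Corollary \ref{cor:DPP} amounts to a repackaging of that theorem through the DPP-based reduction \emph{$T$ weakly compact} $\Rightarrow$ \emph{$T$ Riesz}. The only mildly delicate point, if one wishes to keep the argument self-contained, is the cleanest route from $T^2 \in \mathscr{K}(X)$ to $\sigma_{\mathrm{e}}(T) = \{0\}$, but this is standard and can alternatively be cited from the survey \cite{Die80}.
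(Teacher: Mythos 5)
Your argument is correct and is precisely the route the paper intends: the paragraph preceding the corollary already records that DPP together with weak compactness of $T$ forces $T^2 \in \mathscr{K}(X)$ and hence $T$ is Riesz, and the corollary then follows by applying Theorem~\ref{thm:CommRieszNotHc}. Your proposal simply fills in the same reduction (including the standard Calkin-algebra spectral-mapping step to pass from $T^2$ compact to $\sigma_{\mathrm{e}}(T)=\{0\}$), so it matches the paper's proof.
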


\begin{rmk}
By using the related trace duality $\mathscr{K}(X)^* = \mathscr{N}(X^*)$ and 
arguing as in the proof of 
Proposition \ref{prop:nonEmptySpectra} one obtains the following fact: 
\emph{suppose that $X^*$ has the approximation property, $T \in \mathscr{N}(X^*)$ 
and $\{T, T^2\}$ is linearly independent. Then $\Delta_T$ is not a cyclic 
operator $\mathscr{K}(X) \to \mathscr{K}(X)$.}
\end{rmk}

\section{Commutator Maps of Hilbert Space Operators}  \label{sec:NonHcOpersHilbert}

In this section we provide examples of classes of operators on a separable infinite-dimensional 
Hilbert space $H$ over the complex field for which the corresponding commutator operator is not 
hypercyclic on suitable Banach ideals $J \subset \mathscr{L}(H)$.

Commutator operators induced by normal operators, or \emph{normal commutators},  were first studied by Anderson~\cite{And73b} and later  Maher~\cite{Mah92} and Kittaneh~\cite{Kit95}, among others, investigated them in the Banach ideal setting.
We consider normal commutator maps  acting on the ideal of Hilbert-Schmidt operators $C_2$. 
Recall that $C_2$ is complete in the Hilbert-Schmidt norm, which is defined as
\begin{equation*}
\lVert T \rVert^2_2 = \tr{T^*T} = \sum_k \langle T^* T e_k, e_k \rangle = \sum_k \langle Te_k, Te_k \rangle = \sum_k \lVert Te_k \rVert^2
\end{equation*}
where $T \in C_2$,  $\tr{T}$ denotes the trace of $T$ and $(e_n)$ is any orthonormal basis of  $H$.
The ideal  $C_2$ is a Hilbert space with the corresponding inner product 
\begin{equation*}
\langle S, T \rangle = \tr{T^* S}
\end{equation*}
for $S,T \in C_2$.
Further details on Hilbert-Schmidt operators can be found, for instance, in \cite{Con00}.

Let $X$ be a Banach space. Recall that  $U \in \mathscr{L}(X)$ is \emph{supercyclic} 
if there exists a vector $x \in X$ such that 
\begin{equation*}
\overline{ \left\lbrace \lambda U^n x : n \geq 0, \: \lambda \in \mathbb{C} \right\rbrace } = X.
\end{equation*}
The class of hypercyclic operators is strictly contained in the class of supercyclic operators~\cite[Example 1.15]{BM09}.   Kitai~\cite{Kit82} showed normal operators are 
never hypercyclic and it follows from a result of  Bourdon~\cite{Bou97} that
they are never even supercyclic (cf.  \cite[Theorem 5.30]{GEP11}). 

\begin{prop} \label{prop:commNrmlNeverHc}
Let $H$ be any Hilbert space and let $N \in  \mathscr{L}(H)$ be a normal operator.  
Then the induced commutator operator  $\Delta_N$ is not supercyclic on $C_2$.
\end{prop}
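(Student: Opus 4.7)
The plan is to show that $\Delta_N$ is itself a normal operator on the Hilbert space $(C_2, \langle \cdot, \cdot\rangle)$ and then invoke the theorem of Bourdon (recalled just before the statement) that normal Hilbert space operators are never supercyclic.

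The first step is to compute the Hilbertian adjoints of $L_N, R_N \colon C_2 \to C_2$ with respect to the Hilbert-Schmidt inner product $\langle S, T\rangle = \mathrm{tr}(T^*S)$. Using the trace identity $\mathrm{tr}(AB) = \mathrm{tr}(BA)$, valid whenever one factor lies in $C_1$, one gets
\begin{equation*}
\langle L_N S,\, T\rangle = \mathrm{tr}(T^* N S) = \mathrm{tr}((N^*T)^* S) = \langle S,\, L_{N^*} T\rangle,
\end{equation*}
and analogously $\langle R_N S, T\rangle = \mathrm{tr}(T^* SN) = \mathrm{tr}((TN^*)^* S) = \langle S, R_{N^*} T\rangle$. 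Hence $L_N^* = L_{N^*}$ and $R_N^* = R_{N^*}$ on $C_2$, so that $\Delta_N^* = L_{N^*} - R_{N^*} = \Delta_{N^*}$.

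The second step is to verify directly that $\Delta_N \Delta_N^* = \Delta_N^* \Delta_N$ on $C_2$. Expanding both sides yields, for any $S \in C_2$,
\begin{equation*}
\Delta_N \Delta_{N^*}(S) = NN^* S - N S N^* - N^* S N + S N^* N,
\end{equation*}
while
\begin{equation*}
\Delta_{N^*} \Delta_N(S) = N^* N S - N^* S N - N S N^* + S N N^*.
\end{equation*}
The cross terms $-N S N^* - N^* S N$ are identical; the remaining terms coincide precisely because $N$ is normal, i.e.\ $NN^* = N^*N$. Thus $\Delta_N$ is a bounded normal operator on the Hilbert space $C_2$.

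The conclusion is then immediate: Bourdon's theorem, as cited in the paragraph preceding the proposition, asserts that no normal operator on a Hilbert space is supercyclic, and so $\Delta_N$ is not supercyclic on $C_2$. There is no real obstacle here; the only small point to keep track of is the direction of the involution in the Hilbert-Schmidt inner product, which ensures that the Hilbertian adjoint of $L_N$ is $L_{N^*}$ (rather than some transpose acting on the right), after which the normality of $\Delta_N$ reduces to that of $N$ through the elementary four-term cancellation above.
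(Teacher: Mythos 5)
Your proof is correct and follows essentially the same approach as the paper: you show $\Delta_N$ is normal on the Hilbert space $C_2$ by identifying its Hilbert--Schmidt adjoint as $\Delta_{N^*}$ and checking the four-term cancellation, then invoke Bourdon's theorem that normal operators are never supercyclic. The only cosmetic difference is that the paper cites its earlier trace-duality computation for the adjoint formula and carries out the normality check in the $L,R$ multiplier calculus rather than by applying the operators to a general $S \in C_2$.
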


\begin{proof}
According to the above mentioned result of Bourdon it will be enough to verify that
the commutator operator $\Delta_N$ is a normal  $C_2 \to C_2$.  For this recall by the trace duality of Section \ref{sec:NonHcOpers} that $\Delta_{N^*} \colon C_2 \to C_2$
is the adjoint of the commutator map  $\Delta_N \colon C_2 \to C_2$.

Hence we get that 
\begin{align*}
\Delta_N\Delta_{N^*} &= (L_N - R_N)(L_{N^*} - R_{N^*}) \\
& = L_{NN^*} - L_N R_{N^*} - R_N L_{N^*} + R_{N^*N}  \\
& = L_{N^*N} -  L_{N^*} R_N  - R_{N^*} L_N  + R_{NN^*}  =  \Delta_{N^*}\Delta_N 
\end{align*}
and the result follows.
Note that above we used the facts that  left and right multipliers commute and the normality of $N$.
\end{proof}

The next corollary requires the \emph{comparison principle} which is a useful general 
tool formulated by Shapiro~\cite{Sha93}. 
Recall that the continuous map $T \colon X \to X$ is said to be a \emph{quasi-factor} of 
the continuous map  $T_0 \colon X_0 \to X_0$ if there exists a continuous map with dense range 
$\Psi \colon X_0 \to X$ such that $T\Psi = \Psi T_0$, that is the following diagram commutes.
\begin{equation*}
\xymatrix@C+2em@R+0.5em{ 					
X_0 \ar[r]^{T_0} \ar[d]_\Psi & X_0 \ar[d]^\Psi\\
X \ar[r]^T & X
}
\end{equation*}
When $T_0$ and $T$ are linear operators and the map $\Psi$ can be taken as linear, then we say $T$ is a \emph{linear quasi-factor} of $T_0$.
The comparison principle states that hypercyclicity is preserved by quasi-factors and 
supercyclicity is preserved by linear quasi-factors (cf. \cite[Section 1.1.1]{BM09}).

\begin{cor} \label{cor:commNrmlNotHcIdeals}
Let $J$ be a Banach ideal contained in $C_2$ and $N \in  \mathscr{L}(H)$  a normal operator.  Then the commutator  operator $\Delta_N$ is not supercyclic on $J$.
\end{cor}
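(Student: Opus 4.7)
The plan is to apply the comparison principle recalled just before the corollary, using the inclusion $i \colon J \hookrightarrow C_2$ as the intertwining map, and then appeal to Proposition \ref{prop:commNrmlNeverHc}. Concretely I would show that $\Delta_N|_{C_2}$ is a linear quasi-factor of $\Delta_N|_J$ via $i$, so that supercyclicity of $\Delta_N$ on $J$ would force supercyclicity of $\Delta_N$ on $C_2$, contradicting the proposition.

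First I would check that the inclusion $i \colon (J, \lVert \cdot \rVert_J) \to (C_2, \lVert \cdot \rVert_2)$ is continuous. Both Banach spaces embed continuously into $(\mathscr{L}(H), \lVert \cdot \rVert)$: for $J$ this is axiom (ii) of a Banach ideal, and for $C_2$ the bound $\lVert S \rVert \le \lVert S \rVert_2$ is standard. Hence $i$ has a closed graph, since convergence in either norm implies operator norm convergence, so uniqueness of limits in $\mathscr{L}(H)$ gives the closed graph condition. The closed graph theorem then yields continuity of $i$.

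Next I would verify that $i$ has dense range in $C_2$. By axiom (iv) every rank one operator lies in $J$, hence so does every finite rank operator. Finite rank operators are dense in $C_2$ in the Hilbert-Schmidt norm (truncating $S \in C_2$ by projections $P_n$ onto $\mathrm{span}\{e_1, \dots, e_n\}$ gives $\lVert S - SP_n \rVert_2^2 = \sum_{k>n} \lVert Se_k \rVert^2 \to 0$), so $i(J)$ is dense. Finally, property (iii) guarantees that $\Delta_N$ maps $J$ into $J$ and that $\Delta_N|_J$ is bounded, and evidently $\Delta_N|_{C_2} \circ i = i \circ \Delta_N|_J$ since both are the restriction of $S \mapsto NS - SN$.

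Combining these three points, $\Delta_N|_{C_2}$ is a linear quasi-factor of $\Delta_N|_J$. The comparison principle (cf.~\cite[Section 1.1.1]{BM09}) preserves supercyclicity under linear quasi-factors, so if $\Delta_N$ were supercyclic on $J$ then $\Delta_N$ would be supercyclic on $C_2$, contradicting Proposition \ref{prop:commNrmlNeverHc}. There is no serious obstacle in this argument; the only point requiring care is the continuity of the inclusion, which rests on the fact that both $\lVert \cdot \rVert_J$ and $\lVert \cdot \rVert_2$ dominate the operator norm.
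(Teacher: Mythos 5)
Your argument is correct and matches the paper's proof: use the comparison principle with the inclusion $\Psi \colon J \to C_2$, which has dense range because all finite rank operators lie in $J$ and are dense in $C_2$, then invoke Proposition~\ref{prop:commNrmlNeverHc}. The only difference is that you explicitly justify the continuity of the inclusion via the closed graph theorem, a small point the paper takes for granted.
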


\begin{proof}
The finite rank operators are contained in $J$ and they form a dense subset of $C_2$. 
Hence the inclusion map  $\Psi \colon J \rightarrow C_2$ is a linear map with dense range, for which
$\Delta_N \colon C_2 \to C_2$ is a quasi-factor of $\Delta_N \colon J \to J$ via the following commuting diagram.
\begin{equation*}
\xymatrix@C+2em@R+0.5em{ 
J \ar[r]^{\Delta_N} \ar[d]_\Psi & J \ar[d]^\Psi \\
C_2 \ar[r]^{\Delta_N} & C_2
}
\end{equation*}  
If $\Delta_N$ was supercyclic on $J$ then it would follow by the comparison principle that it would be supercyclic on $C_2$.  However we know from Proposition \ref{sec:NonHcOpers} that $\Delta_N$ is not supercyclic on $C_2$ and hence $\Delta_N$ cannot be supercyclic on $J$.
\end{proof}

\begin{rmk} \label{rmk:examplesOfNrmlComms}
To illustrate Corollary \ref{cor:commNrmlNotHcIdeals} we let $U \colon H \to H$ be  any unitary operator. The operator  $cU$ is  normal on $H$ for any constant $c \neq 0$ and hence the induced commutator map $\Delta_{U}$  is not supercyclic on any  Banach ideal $J \subseteq C_2$.  Concrete examples of such operators include the bilateral backward and forward shifts on $\ell^2(\mathbb{Z})$.  
\end{rmk}

We note that Kitai~\cite{Kit82} also showed hyponormal operators on the Hilbert space $H$ are never hypercyclic and Bourdon~\cite{Bou97} proved they are not even supercyclic. Recall that 
the operator $T \in \mathscr{L}(H)$ is \emph{positive}, denoted $T \geq 0$, if the inner product $\langle Tx, x \rangle \geq 0$ for all $x \in H$.  We say $T \in \mathscr{L}(H)$ is \emph{hyponormal} if $T^* T - T T^* \ge 0$
or equivalently if $\left\lVert Tx  \right\rVert \geq \left\lVert T^* x \right\rVert$ for all $x \in H$.
The class of hyponormal operators contains some well known classes of operators, for instance the subnormal, normal and self-adjoint operators~\cite{Hal82}. However, it follows from 
\cite[Exercise II.4.7]{Con91} that the commutator map 
$\Delta_T \colon C_2 \to C_2$ is hyponormal if and only if $T$ is normal. Thus hyponormality 
can not be directly employed to improve Proposition \ref{sec:NonHcOpers}.

A generalisation of hyponormality to the Banach space setting is as follows, the operator $T \in \mathscr{L}(X)$ is \emph{paranormal} if
\begin{equation*}
\Vert Ux \Vert^2  \leq  \Vert U^2x \Vert \cdot \Vert x \Vert
\end{equation*}
for all  $x \in X$.  It follows from Bourdon~\cite{Bou97} that paranormal operators are never supercyclic (see also \cite[p. 159]{GEP11}).

This suggests the following related question: if $T \in \mathscr{L}(\ell^2)$ is normal (or even hyponormal), does it follow 
that $\Delta_T$ is paranormal $\mathscr{K}(\ell^2) \to \mathscr{K}(\ell^2)$?
In the case when $T \in \mathscr{L}(\ell^2)$ is paranormal the following example illustrates that this does not necessarily hold.

\begin{eg}
We show there exists a paranormal operator $T \in \mathscr{L}(\ell^2)$ such that the induced commutator map $\Delta_T$ is not paranormal on the Banach ideals $C_2$ and $\mathscr{K}(\ell^2)$.

Let $T \in \mathscr{L}(\ell^2)$ be paranormal such that $T^*$ is not paranormal and $\ker(T) \neq \{0\}$.
We make the counter assumption that $\Delta_T$ is paranormal on $\mathscr{K}(\ell^2)$, which implies 
\begin{equation}  \label{eq:CommAnotPara}
\lVert TS - ST \rVert^2 \leq \lVert T^2S - 2TST + ST^2 \rVert  \lVert S\rVert
\end{equation}
for all $S \in \mathscr{K}(\ell^2)$.

Our choice of $T$ gives that there exists normalised $u, v \in \ell^2$ %, satisfying $\lVert u \rVert = \lVert v \rVert = 1$,  
such that $Tv = 0$ and 
\begin{equation}  \label{eq:CommAnotPara2}
\lVert T^* u \rVert^2 > \lVert T^{*2} u \rVert.
\end{equation}
If we consider the rank one operator $S = u \otimes v$, observe that since $Tv = 0$ 
\begin{align*}
\lVert TS - ST \rVert^2 &= \lVert u \otimes Tv - T^* u \otimes v \rVert^2 = \lVert T^* u \rVert^2
\intertext{and}
\lVert T^2S - 2TST + ST^2 \rVert &= \lVert u \otimes T^2 v - 2 T^* u \otimes Tv + T^{*2}u \otimes v \rVert = \lVert T^{*2}u \rVert.
\end{align*}
By \eqref{eq:CommAnotPara} it follows that  that $\lVert T^* u \rVert^2 \leq \lVert T^{*2}u \rVert$, however this contradicts \eqref{eq:CommAnotPara2} and hence $\Delta_T$ is not paranormal on $\mathscr{K}(\ell^2)$. 
The same argument holds on $C_2$ since $\lVert u \otimes v \rVert_2 = \lVert u \rVert \lVert v \rVert$ and thus $\Delta_T$ is not paranormal on $C_2$.

Finally we show that there exist such  an operator $T \in \mathscr{L}(\ell^2)$.  We consider the forward shift $S \colon \ell^2 \to \ell^2$, which is paranormal while its adjoint the backward shift $S^* = B \colon \ell^2 \to \ell^2$ is not.  
The shift $S$ is an isometry on $\ell^2$ and by setting $Se_0 = 0$ we may extend it to $\ell^2 \oplus [e_0]$, which is isomorphic to $\ell^2$.  Thus we obtain a paranormal operator on $\ell^2 \oplus [e_0] \cong \ell^2$, with  a nontrivial  kernel  such that its adjoint is not paranormal.

\end{eg}

\section{Dynamics of the Commutator Map of the Backward Shift}  \label{sec:commBwNonHc}

In this section we study the dynamical properties of the commutator map $\Delta_{cB}$ on  
$\mathscr{K}(\ell^2)$ induced by scalar multiples $cB \colon \ell^2 \to \ell^2$ of the backward shift, %where $\lvert c \rvert > 1$ 
where
\begin{equation*}
B(x_1,x_2,\dotsc) = (x_2, x_3,\dotsc)
\end{equation*}
for $(x_n) \in \ell^2$.
Recall from  Section \ref{sec:BgMultipliers} that the dynamical properties 
of the operator $T$ and  the multipliers  $L_T$ or $R_T$ acting on Banach ideals are related. In particular, one expects that
reasonable candidates for hypercyclic commutator maps $\Delta_T$
on $\mathscr{K}(\ell^2)$
should arise from operators $T$ which satisfy the Hypercyclicity Criterion
and induce commutator maps having (at least) dense range.  

The map $\Delta_B$ is a classical example of this kind: 
$cB$ satisfies the Hypercyclicity Criterion for $\lvert c \rvert > 1$, 
see \cite[Example 1.9]{BM09}, and 
$\overline{\Delta_B \left( \mathscr{K}(\ell^2) \right)} = \mathscr{K}(\ell^2)$.
In fact, in the trace duality of  \eqref{dual} %of   Section \ref{sec:NonHcOpers} 
one has that
\begin{equation*}
\left( \Delta_B \colon \mathscr{K}(\ell^2) \to \mathscr{K}(\ell^2) \right) ^* = 
\Delta_{S} \colon \mathscr{N}(\ell^2) \to \mathscr{N}(\ell^2)
\end{equation*}
where $S = B^* \in \mathscr{L}(\ell^2)$ is the forward shift 
$S(x_1,x_2,\dotsc) = (0,x_1,x_2,\dotsc)$ for $(x_n) \in \ell^2$. Moreover, 
it is a classical fact  that 
$U \notin \mathscr{K}(\ell^2)$ whenever 
$SU = US$ and $U \neq 0$, see \cite[Problem 147]{Hal82}, whence the annihilator
$\left\lbrace \Delta_B \left(\mathscr{K}(\ell^2) \right) \right\rbrace^\perp = \{ 0 \}$ in $\mathscr{N}(\ell^2)$. 

Our main result  demonstrates that $\Delta_{cB}$ is not hypercyclic on 
$\mathscr{K}(\ell^2)$ for any scalar $c$.  Observe that the point spectrum of $S = B^*$
is empty, so the general results from Section \ref{sec:NonHcOpers} do not apply here.
Our argument will instead explicitly demonstrate that  $\Delta_{cB}$ 
does not have a dense orbit in $\mathscr{K}(\ell^2)$.

We first prepare the setting for the argument.  Let $(e_n)$ be the orthonormal unit vector basis
in $\ell^2$ and  $A = (a_{i,j}) \in \mathscr{K}(\ell^2)$, where the matrix representation is with respect to $(e_n)$, that is, $a_{i,j} = \langle Ae_j,e_i \rangle = (A)_{i,j}$. Then the commutator map $\Delta_B(A)$ has the matrix representation
\begin{align}\label{matr}
\Delta_B(A) = BA - AB = 
\begin{pmatrix}
  a_{2,1} 	& a_{2,2} - a_{1,1} 	& a_{2,3} - a_{1,2} 		& \cdots  \\
    a_{3,1} & a_{3,2} - a_{2,1}	&  a_{3,3} -  a_{2,2}	& \cdots  \\
    a_{4,1} & a_{4,2} - a_{3,1} 	& a_{4,3} - a_{3,2} 		& \cdots  \\
  \vdots  	& \vdots 			& \vdots  				& \ddots  
\end{pmatrix} 
\end{align}
It is a known fact that $(e_m \otimes e_n)_{m, n \in \mathbb N}$ is a Schauder basis 
for $\mathscr{K}(\ell^2)$ in the shell-ordering $\max(m,n) = r$, where $r \in \mathbb N$ (see for instance \cite[Section 4.3]{Rya02}).
For each $k \ge 0$ let 
\begin{equation*}
D_k = [e_r \otimes e_{r+k}: r \ge 1]
\end{equation*}
where $[M]$ denotes the closed linear span in $\mathscr{K}(\ell^2)$ of the set 
$M \subset \mathscr{K}(\ell^2)$. In other words, $D_k$ consists of the compact operators
whose non-zero matrix elements are supported on the $k^\mathrm{th}$ (lower) subdiagonal.
It is well known that $D_k$ is complemented  in $\mathscr{K}(\ell^2)$ by the canonical
norm-$1$ projection $P_{D_{k}}$, for which 
\begin{equation*}
\left( P_{D_{k}}(B) \right)_{i,j}  = 
\begin{cases}
(B)_{r+k,r} & \textrm{if } (i,j) = (r+k,r) \textrm{ for some } r,\\
0 & \textrm{otherwise},
\end{cases}
\end{equation*}
see for instance \cite[Proposition 1.c.8]{LT77}.
Observe that  \eqref{matr} states that $\Delta_B$ maps the subdiagonal 
$D_k$ to the subdiagonal $D_{k-1}$ immediately above it for each $k \ge 1$.
This fact will be essential for the below argument.

\begin{thm} \label{thm:CommBwShiftNotHc}
Let $B \in \mathscr{L}(\ell^2)$ be the backward shift operator.  Then the commutator map
$\Delta_{cB}$ is not hypercyclic $\mathscr{K}(\ell^2) \to \mathscr{K}(\ell^2)$
for any constant $c$.
\end{thm}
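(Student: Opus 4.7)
The plan is to argue by contradiction. Suppose $A_0 \in \mathscr{K}(\ell^2)$ is a hypercyclic vector for $\Delta_{cB}$, and fix the rank-one target $Y_0 := e_1 \otimes e_1 \in \mathscr{K}(\ell^2)$. Since $\mathscr{K}(\ell^2)$ is infinite-dimensional, the orbit cannot be finite, so I may choose a sequence $n_k \to \infty$ with $\varepsilon_k := \|\Delta_{cB}^{n_k} A_0 - Y_0\|_{op} \to 0$, and consequently $|(\Delta_{cB}^{n_k} A_0)_{r,r} - \delta_{r,1}| \leq \varepsilon_k$ uniformly in $r$. Iterating the observation immediately preceding the theorem (that $\Delta_B$ carries $D_k$ to $D_{k-1}$, and does so as the \emph{backward difference} $(\nabla\beta)_r := \beta_r - \beta_{r-1}$, with $\beta_s := 0$ for $s \leq 0$), the diagonal of an iterate is read off entirely from a single subdiagonal of the initial vector:
\[
(\Delta_{cB}^n A)_{r,r} \;=\; c^n (\nabla^n \alpha)_r, \qquad \alpha_r := (A)_{n+r,\, r}.
\]
Writing $\alpha^{(k)}_r := (A_0)_{n_k + r,\, r}$, the estimate therefore says $\bigl|c^{n_k}(\nabla^{n_k}\alpha^{(k)})_r - \delta_{r,1}\bigr| \leq \varepsilon_k$ for every $r \geq 1$.

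The core of the argument is an exact/approximate comparison for this difference equation. The unique solution of $c^n(\nabla^n\widetilde\alpha)_r = \delta_{r,1}$ (with $\widetilde\alpha_s = 0$ for $s \leq 0$) is the fundamental solution of the $n$-fold iterated running sum, namely
\[
\widetilde\alpha_r \;=\; \binom{r+n-2}{n-1}\big/\,c^n,
\]
which is a polynomial of degree $n-1$ in $r$ and hence \emph{unbounded in $r$} for $n \geq 2$. Setting $E := \alpha^{(k)} - \widetilde\alpha$ and inverting $\nabla^{n_k}$ by the $n$-fold iterated sum, the hockey-stick identity $\sum_{s=1}^r\binom{r-s+n-1}{n-1} = \binom{r+n-1}{n}$ yields the explicit bound
\[
|E_r| \;\leq\; \binom{r+n_k-1}{n_k}\,\varepsilon_k\big/|c|^{n_k}.
\]

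The contradiction then comes by playing this growth against the trivial bound $|\alpha^{(k)}_r| \leq \|A_0\|$, which holds for every matrix entry of a bounded operator. Writing $\alpha^{(k)}_r = \widetilde\alpha_r + E_r$ and using $\binom{r+n-1}{n} = \tfrac{r+n-1}{n}\binom{r+n-2}{n-1}$, one verifies that for $r$ in the window $r \leq n_k/(3\varepsilon_k)$ the error $|E_r|$ absorbs at most half of $|\widetilde\alpha_r|$, so that
\[
\binom{n_k+r-2}{n_k-1} \;\leq\; 2\,\|A_0\|\,|c|^{n_k}.
\]
Taking $r_k := \lfloor n_k/(3\varepsilon_k) \rfloor$, so that $r_k \gg n_k$ for $k$ large, Stirling's formula gives the asymptotic $\binom{n_k+r_k-2}{n_k-1} \sim (e/(3\varepsilon_k))^{n_k}$ up to a polynomial factor in $n_k$. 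Since $\varepsilon_k \to 0$, eventually $e/(3\varepsilon_k |c|) > 1$, and the left-hand side grows strictly faster in $k$ than $|c|^{n_k}$, contradicting the previous display for $k$ large. The delicate point of the plan is precisely this balancing of $(n_k, r_k, \varepsilon_k)$: the bound on $E$ is only effective in the window $r \lesssim n/\varepsilon$, yet $\widetilde\alpha_r$ must already dominate $\|A_0\|\,|c|^n$ inside that window, so the Stirling comparison has to yield the correct exponential base. Once this balance is set, the conclusion is forced.
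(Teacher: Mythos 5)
Your proof is correct, and it takes a genuinely different route from the paper's. Both arguments exploit the same structural fact (that $\Delta_B$ acts on the subdiagonals by backward differencing, so the diagonal of $\Delta_{cB}^n A$ is $c^n\nabla^n$ of the $n$-th subdiagonal of $A$), but they extract the contradiction differently. The paper packages each subdiagonal into a power series $f_k(z) = \sum_r a_{k+r,r}z^{r-1}$ on $\mathbb D$, observes that iterating the commutator multiplies by $(1-z)^n$, and then for a \emph{fixed} small $\varepsilon$ with $3|c|\varepsilon<1$ evaluates at the single point $z_0 = 1-3\varepsilon$: compactness of $A$ forces $|f_n(z_0)|\le 1/3$, while the approximation estimate forces $|g_n(z_0)|\le 1/3$, and the identity $g_n(z_0)=c^n(3\varepsilon)^nf_n(z_0)-1$ yields a clash; a single iterate $n>k_\varepsilon$ suffices. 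You instead work entirely in coefficient space: you write down the explicit fundamental solution $\binom{r+n-2}{n-1}/c^n$ of the iterated difference equation, bound the error $E$ via the hockey-stick identity, and let $\varepsilon_k\to 0$ and $n_k\to\infty$ together, using Stirling to show the forced polynomial growth of $\alpha^{(k)}_r$ is eventually incompatible with boundedness. Your route avoids the generating-function/analytic-evaluation step and uses only $|\alpha^{(k)}_r|\le\|A_0\|$ rather than the full compactness decay $\|A-P_kA\|\to 0$, at the cost of an asymptotic analysis along a sequence rather than a one-shot pointwise contradiction.

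Two small points worth tightening. First, the step ``the orbit cannot be finite, so I may choose $n_k\to\infty$ with $\varepsilon_k\to 0$'' is correct but deserves the standard justification: a hypercyclic orbit visits every nonempty open set infinitely often, since any tail of the orbit is still dense (the Banach space has no isolated points), hence the visiting times to balls around $e_1\otimes e_1$ of shrinking radius can be taken $\to\infty$. Second, you should dispose of $c=0$ at the outset, since $\Delta_0=0$ is trivially non-hypercyclic and your displays divide by $c^n$; the paper does this with ``Fix $c\neq 0$.''
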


\begin{proof}
Fix $c \neq 0$. We assume to the contrary 
that $A = (a_{i,j}) \in \mathscr{K}(\ell^2)$ is a hypercyclic vector 
for $\Delta_{cB}$.

We define a sequence  $f_k \colon \mathbb{D} \to \mathbb{C}$
of analytic functions on the open unit disk $\mathbb{D}$ by
\begin{equation*}
f_k(z) = \sum_{r=1}^\infty a_{k+r,r} z^{r-1}
\end{equation*}
where $z \in \mathbb{D}$,  $k \ge 0$  and the sequence $(a_{k+r,r})$ is composed of the matrix elements of 
the $k^\mathrm{th}$ subdiagonal of $A = (a_{i,j})$.  
Each function $f_k$ is clearly analytic in $\mathbb{D}$ since 
$(a_{k+r,r})_{r \geq 1}$ is a bounded sequence for $k \ge 0$.

Let $H(\mathbb{D})$ be the space of analytic functions on  $\mathbb{D}$.
 Next we define a transformation $\tau \colon H(\mathbb{D}) \to H(\mathbb{D})$ by
\begin{equation*}
(\tau g)(z) = b_1 + \sum_{r=2}^\infty \left(b_r - b_{r-1} \right) z^{r-1}
\end{equation*}
whenever $g(z) = \sum_{r=1}^\infty b_r z^{r-1} \in H(\mathbb{D})$.
Observe that if we apply $\tau$ to $f_k(z)$ we get
\begin{equation*}
(\tau f_k)(z) = a_{k+1,1} + \sum_{r=2}^\infty \left(a_{k+r,r} - a_{k+r-1,r-1} \right) z^{r-1},
\quad z \in \mathbb D.
\end{equation*}
In particular,  by  \eqref{matr} the map $\tau$ encodes the  manner in which 
$\Delta_B$ maps the matrix elements of $A$ on
the $k^\mathrm{th}$ subdiagonal $D_k$ to the subdiagonal $D_{k-1}$.
Furthermore,  notice for $z \in \mathbb{D}$ and $k \in \mathbb N$ that
\begin{align*}
(\tau f_k)(z) &=  a_{k+1,1} + (a_{k+2,2} - a_{k+1,1})z + (a_{k+3,3} - a_{k+2,2})z^2 + \dotsb  \\
&= (1-z)f_k(z).
\end{align*}
By $n$-fold iteration we get  that
\begin{equation}\label{iterate}
\left( \tau^n f_k \right)(z) = (\tau \circ \dotsb \circ \tau f_k)(z) =  (1-z)^n f_k(z)
\end{equation}
for all $z \in \mathbb{D}$,  $k \ge 0$ and $n \geq 1$.

Let $M_k = [e_m \otimes e_n:  1 \le m, n \le k]$ for $k \in \mathbb{N}$ and  
denote by  $P_k \colon \mathscr{K}(\ell^2) \to M_k$ the natural norm-$1$
projection defined  for $B = (b_{i,j}) \in \mathscr{K}(\ell^2)$ by $\left( P_k(B) \right)_{i,j} = b_{i,j}$ 
for $1 \le i,j \le k$ and $(P_k(B))_{i,j} = 0$ otherwise.
Next fix $\varepsilon > 0$ small enough so that 
\begin{equation}\label{var}
3\left|c \right| \varepsilon < 1.
\end{equation}
Since $A \in \mathscr{K}(\ell^2)$  it is well known that  
$\lim_k  \left\lVert A - P_k A \right\rVert = 0$, see for instance \cite[Theorem 4.4]{Con90}.
Hence there exists
 $k_\varepsilon \in \mathbb{N}$ such that $\left\lVert A - P_k A \right\rVert < \varepsilon$
 for all $k \geq k_\varepsilon$.  In particular, 
\begin{equation}\label{est}
\left|a_{k+r,r} \right| \leq  \left\lVert A - P_k A \right\rVert < \varepsilon
\end{equation} 
for $r \in \mathbb{N}$ and $k \ge k_{\varepsilon}$.

Recall by our counter assumption that  $A \in \mathscr{K}(\ell^2)$ is a hypercyclic 
vector for $\Delta_{cB}$.
Next we approximate the rank one
operator $e_1 \otimes e_1 = (d_{i,j}) \in \mathscr{K}(\ell^2)$, 
where $d_{1,1} = 1$ and $d_{i,j} = 0$ for  $i > 1$ or $j > 1$, that is 
\begin{equation*}
e_1 \otimes e_1 =
\begin{pmatrix}
   1 & 0 &  \cdots  \\
   0 & 0 &  \cdots  \\
  \vdots & \vdots &\ddots  
\end{pmatrix}
\end{equation*}
Hence it follows from our counter assumption that  there exists an integer $n > k_\varepsilon$ such that
\begin{equation*}
 \left\lVert \Delta_{cB}^n (A) - e_1 \otimes e_1 \right\rVert < \varepsilon.
\end{equation*}
Let $g_n \colon \mathbb D \to \mathbb C$ be the analytic function on  $\mathbb{D}$ 
derived from the above main diagonal $D_0$,
that is, $g_n(z) = \sum_{r=1}^\infty h_rz^{r-1}$, where
$h_r = \left( \Delta_{cB}^n (A) - e_1 \otimes e_1 \right)_{r,r}$ for $r \in \mathbb N$.
 Observe that 
\begin{equation*}
 \left\lVert P_{D_{0}} \circ (\Delta_{cB}^n (A) - e_1 \otimes e_1)_{|D_{k}} \right\rVert \le
  \left\lVert \Delta_{cB}^n (A) - e_1 \otimes e_1 \right\rVert < \varepsilon
\end{equation*}
so that $\vert h_r\vert < \varepsilon$ for each $r$.
Observe further that 
$g_n$ encodes the way $\Delta_{cB}^n (A) - e_1 \otimes e_1$ maps 
the subdiagonal $D_k$ to the main diagonal $D_0$, whence 
\begin{equation*}
g_n =  c^n \tau^n f_n  - 1
\end{equation*}
by  \eqref{iterate}. Moreover, it follows by a geometric series 
estimation  that
\begin{align} 
\left|g_n(z) \right| &\leq  \sum_{r=1}^\infty \left|h_r \right| \left|z \right|^{r-1}  
\leq \varepsilon \sum_{r=1}^\infty \left|z \right|^{r-1} 
\leq \frac{\varepsilon}{1 - \left|z \right|}, \quad z \in \mathbb D.	\label{eq:CommNotHcContradiction}
\end{align}
Recall further  by  \eqref{est} that $\left|a_{n+r,r} \right| < \varepsilon$ 
for all $r \in \mathbb{N}$ and $n > k_\varepsilon$.  
The geometric series estimate applied to  $f_n(z) = \sum_{r=1}^\infty a_{n+r,r} z^{r-1}$ 
yields for any $z \in \mathbb{D}$ satisfying 
$\left|z \right| \leq 1 - 3\varepsilon$ that one has 
\begin{equation} \label{eq:f_nUprBnd}
\left|f_n(z) \right| \leq \sum_{r=1}^\infty \left|a_{n+r,r} \right| \left|z \right|^{r-1} \leq \varepsilon \cdot \frac{1}{1 -(1 - 3\varepsilon)} = \frac{1}{3}.
\end{equation}
On the other hand, for $z_0 = 1 - 3\varepsilon$ we have by 
 \eqref{eq:CommNotHcContradiction} that
\begin{equation}  \label{eq:contradiction1}
\left|g_n(z_0) \right| {\leq}  \frac{\varepsilon}{1 - \left|z_0 \right|} 
= \frac{\varepsilon}{1 - (1 - 3\varepsilon)} = \frac{1}{3}.
\end{equation}
Recall next that $\lvert 3 c \varepsilon \rvert^n < 1$ by  \eqref{var} for 
$n \geq k_\varepsilon$ so that $\left|f_n(z_0) \right| \leq \frac{1}{3}$
by \eqref{eq:f_nUprBnd}, whence 
\begin{equation*}
\left|\left(3 c \varepsilon\right)^n f_n(z_0) \right| < \frac{1}{3}.
\end{equation*}
For the point $g_n(z_0)$ this means  that
\begin{equation} \label{eq:contradiction2}
\left|g_n(z_0) \right| = \left|c^n (1-z_0)^n f_n(z_0) - 1 \right| = \left|c^n \left(3\varepsilon\right)^n f_n(z_0) - 1 \right| > \frac{2}{3}.
\end{equation}

The fact that \eqref{eq:contradiction1}  and  \eqref{eq:contradiction2}  contradict each other 
implies that  $\Delta_{cB}$ can not be hypercyclic.
\end{proof}

\begin{rmk} %\label{rmk:comm}
 Let  $X = \ell^p$ for $1 < p < \infty$ or $X = c_0$.  The result in Theorem \ref{thm:CommBwShiftNotHc} also holds for the backward shift 
$B \in \mathscr{L}(X)$ and the commutator map $\Delta_{cB} \colon 
\mathscr{K}(X) \to \mathscr{K}(X)$. The argument for this extension is identical to the 
one for $X = \ell^2$. 
\end{rmk}

The simple observation that $\Delta_{I + cB} = \Delta_{cB}$ allows us to extend Theorem \ref{thm:CommBwShiftNotHc} to the family of operators 
$I + cB \colon \ell^2 \to \ell^2$, where $I$ is the identity operator on $\ell^2$ and $c \neq 0$.  This family of operators was shown to be hypercyclic by Salas~\cite{Sal95} and Le{\'o}n-Saavedra and Montes-Rodr{\'i}guez~\cite{LM97} proved they satisfy the Hypercyclicity Criterion.  
In fact more is true and below we see that
Theorem \ref{thm:CommBwShiftNotHc}  also holds for analytic polynomials 
in the backward shift $B$.  For any polynomial $p(z) = \sum_{j=0}^m c_j z^j$, where $m \in \mathbb{N}$ and $c_j \in \mathbb{C}$
for $0 \le j \le m$, we define the related operator $p(B) =  \sum_{j=0}^m c_j B^j \in 
\mathscr{L}(\ell^2)$. For example, if $p(z) = z + z^2$, then
\begin{equation*}
p(B)(x_1,x_2,\ldots ) = (x_2+x_3, x_3+x_4,\ldots), \quad x = (x_k) \in \ell^2.
\end{equation*}

\begin{thm} \label{thm:polyBwNonHc}
Let $p(B) \colon \ell^2 \to \ell^2$ be any analytic polynomial in the backward shift $B$.  
Then the induced commutator operator $\Delta_{p(B)}$ is not hypercyclic on 
$\mathscr{K}(\ell^2)$.
\end{thm}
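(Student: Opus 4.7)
The plan is to adapt the argument of Theorem~\ref{thm:CommBwShiftNotHc} by extracting the right subdiagonal generating-function identity for each $\Delta_{B^j}$ and combining them multinomially. Write $p(z) = \sum_{j=0}^m c_j z^j$; since $\Delta_I = 0$ one may assume $c_0 = 0$, and the case $m = 0$ (so $\Delta_{p(B)} \equiv 0$) is trivial, so suppose $m \ge 1$.

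The first step generalises the matrix computation behind \eqref{matr}. Using the notation $f_k^A(z) = \sum_{r \ge 1} a_{k+r, r} z^{r-1}$ for the generating function of the $k$th subdiagonal of $A = (a_{i,j}) \in \mathscr{K}(\ell^2)$, a direct matrix calculation yields
\[
f_k^{\Delta_{B^j}(A)}(z) = (1 - z^j)\, f_{k+j}^A(z), \qquad k \ge 0,\ j \ge 1,
\]
which recovers \eqref{iterate} when $j = 1$. Because the operators $\Delta_{B^j}$ pairwise commute (as $L_B$ and $R_B$ do), the multinomial expansion of $\Delta_{p(B)}^n = (\sum_{j=1}^m c_j \Delta_{B^j})^n$ combined with iterated application of the above identity produces the clean formula
\[
f_0^{\Delta_{p(B)}^n(A)}(z) = \sum_{s = n}^{mn} a_s(z)\, f_s^A(z),
\]
where $a_s(z)$ is the coefficient of $u^s$ in the bivariate polynomial $Q(z,u)^n$ with $Q(z,u) := \sum_{j=1}^m c_j (1 - z^j)\, u^j$.

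The second step produces a geometric bound on the total variation of these coefficients at a well-chosen real test point. For $z_0 = 1 - \delta \in (0, 1)$ the elementary inequality $|1 - z_0^j| = 1 - (1-\delta)^j \le j\delta$ together with the triangle inequality applied to the multinomial expansion gives
\[
\sum_{s=n}^{mn} |a_s(z_0)| \le \Bigl(\sum_{j=1}^m |c_j|\,|1 - z_0^j|\Bigr)^n \le (\delta C)^n, \qquad C := \sum_{j=1}^m j\,|c_j| > 0.
\]
Fixing $\delta := 1/(2C)$ makes this quantity equal to $2^{-n}$, reproducing the rapid geometric decay that drove the proof of Theorem~\ref{thm:CommBwShiftNotHc}.

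The contradiction is then extracted essentially verbatim from the $cB$ case. Assuming $A \in \mathscr{K}(\ell^2)$ is a hypercyclic vector for $\Delta_{p(B)}$, pick $\varepsilon > 0$ with $2C\varepsilon < 1/3$, invoke the compactness of $A$ to obtain $k_\varepsilon$ with $|f_s^A(z_0)| \le \varepsilon/\delta = 2C\varepsilon$ for every $s \ge k_\varepsilon$, and select $n \ge k_\varepsilon$ with $\|\Delta_{p(B)}^n(A) - e_1 \otimes e_1\| < \varepsilon$. The approximation estimate yields $|f_0^{\Delta_{p(B)}^n(A)}(z_0) - 1| < 2C\varepsilon < 1/3$, hence $|f_0^{\Delta_{p(B)}^n(A)}(z_0)| > 2/3$, while the coefficient bound forces the incompatible estimate
\[
|f_0^{\Delta_{p(B)}^n(A)}(z_0)| \le 2^{-n} \cdot 2C\varepsilon < 1/3.
\]
The main difficulty lies in the first step: since $\Delta_{p(B)}$ entangles $m$ subdiagonals at each iteration, the single-subdiagonal recursion from Theorem~\ref{thm:CommBwShiftNotHc} does not carry over directly; the observation that saves the day is that the building blocks $\Delta_{B^j}$ commute and each acts diagonally (by $(1 - z^j)$ composed with a subdiagonal shift) on the generating-function side, reducing the full $n$-fold iterate to the single bivariate polynomial $Q(z,u)^n$.
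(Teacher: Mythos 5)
Your proof is correct, and it follows the same high-level strategy as the paper: encode the subdiagonals of $A$ as power series $f_k(z)$, observe that the commutator acts on these generating functions by multiplication by $(1-z^j)$ with a shift in the index, and derive a contradiction by evaluating at a real test point $z_0$ close to $1$ where the hypercyclicity estimate forces $|f_0^{\Delta_{p(B)}^n(A)}(z_0)|$ to be near $1$ while the geometric decay of the coefficients forces it to be small. Where your write-up goes further than the paper's is in the bookkeeping of the main-diagonal contribution. The paper asserts the identity $g_n = (c_m)^n(1-z^m)^n f_{mn} - 1$ (with an evident typo $(c_m)^m$ for $(c_m)^n$), but this is only the $D_{mn}$-to-$D_0$ component of $\Delta_{p(B)}^n(A)$; for $m \ge 2$ every $D_s$ with $n \le s \le mn$ feeds into $D_0$ under the multinomial expansion of $\bigl(\sum_{j=1}^m c_j \Delta_{B^j}\bigr)^n$, so the stated equality is not literally true, and the paper's sketch leaves the reader to fill in how the remaining terms are controlled. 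Your bivariate generating function $Q(z,u) = \sum_{j=1}^m c_j(1-z^j)u^j$ and the clean identity $f_0^{\Delta_{p(B)}^n(A)}(z) = \sum_{s=n}^{mn} a_s(z) f_s^A(z)$ with $a_s = [u^s]Q(z,u)^n$ track all contributions exactly, and the triangle-inequality bound $\sum_s |a_s(z_0)| \le \bigl(\sum_j |c_j|\,|1-z_0^j|\bigr)^n \le (\delta C)^n$ with $\delta = 1/(2C)$ supplies the uniform geometric decay that the paper's single-term formula cannot deliver on its own. Two small remarks: the commutativity of the $\Delta_{B^j}$ is not actually needed for your identity (a straight $n$-fold iteration of $f_k^{\Delta_{p(B)}(A)} = \sum_j c_j(1-z^j)f_{k+j}^A$ already gives $Q(z,u)^n$), and you should ensure $n \ge k_\varepsilon$ so that every index $s \ge n$ lands in the range where $|a_{s+r,r}| < \varepsilon$; you do say this, so the argument is complete.
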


\begin{proof}
Let $p(z) = \sum_{j=0}^m c_j z^j$, where $c_m \neq 0$  and $m \ge 1$. 
 The argument is a modification of the one for Theorem \ref{thm:CommBwShiftNotHc}.
 We will keep the notation of that argument 
and only indicate the required changes.

Observe that by  \eqref{matr} the commutator operator $\Delta_{B^j}$
maps the $k^\mathrm{th}$  subdiagonal  $D_k$ in $\mathscr{K}(\ell^2)$
to the $(k-j)^\mathrm{th}$ diagonal 
$D_{k-j}$  for each  $j = 1, \ldots, m$.
Note here that if $k - j < 0$ then $D_{k-j}$ is a superdiagonal in  $\mathscr{K}(\ell^2)$.

Assume again to the contrary 
that $A = (a_{i,j}) \in \mathscr{K}(\ell^2)$ is a hypercyclic vector 
for $\Delta_{p(B)}$. As above we define the analytic maps $f_k \in H(\mathbb{D})$ by
\begin{equation*}
f_k(z) = \sum_{r=1}^\infty a_{k+r,r} z^{r-1}, \quad z \in \mathbb D
\end{equation*}
which are related to the $k^\mathrm{th}$  subdiagonal  of $A = (a_{i,j})$ for $k \ge 0$. 
For each fixed $j  \ge 2$ 
define the corresponding transformation $\tau_j \colon H(\mathbb{D}) \to H(\mathbb{D})$,
which acts on  $f_k$ by
\begin{align*}
(\tau_j f_k)(z) & = a_{k+1, 1} + \dotsb + a_{k+j,j} z^{j-1} + 
\sum_{r=j+1}^\infty \left( a_{k+r,r} - a_{k+r-j, r-j} \right) z^{r-1}\\
& = (1 - z^j)f_k(z).
\end{align*}
In this case   $n$-fold iteration satisfies
\begin{equation}\label{iterate2}
(\tau_j^n f_k)(z) = (1-z^j)^n f_k(z), \quad z \in \mathbb D
\end{equation}
for each $j$  and $k$ satisfying $1 \le j \le m$ and $k \ge n \cdot j$.

Let $\varepsilon > 0$ be given. Since $A \in \mathscr{K}(\ell^2)$ is a hypercyclic vector 
for $\Delta_{p(B)}$ we may  choose (following  the proof of 
Theorem \ref{thm:CommBwShiftNotHc} and its notation) a power $n \ge k_\varepsilon$ so that
\begin{equation*}
 \left\lVert \Delta_{p(B)}^n (A) - e_1 \otimes e_1 \right\rVert < \varepsilon.
\end{equation*}
 Let $g_n(z) = \sum_{r=1}^\infty h_rz^{r-1}$ be the analytic map derived from 
 the main diagonal of the operator $\Delta_{p(B)}^n (A) - e_1 \otimes e_1$.
The crucial observation is that the analytic map 
\begin{equation*}
g_n = (c_m)^m (1-z^m)^n f_{mn} - 1 
\end{equation*}
on $\mathbb{D}$ precisely encodes the manner in which $\Delta_{p(B)}^n (A) - e_1 \otimes e_1$ 
maps the subdiagonal  $D_{mn}$ to $D_0$. Here $\vert h_r\vert < \varepsilon$
for each $r \in \mathbb N$, since  again
\begin{equation*}
 \left\lVert P_{D_{0}} \circ (\Delta_{p(B)}^n (A) - e_1 \otimes e_1)_{| D_{mn}} \right\rVert
 \le  \left\lVert \Delta_{p(B)}^n (A) - e_1 \otimes e_1 \right\rVert < \varepsilon.
\end{equation*}
Following these preparations one finds the desired contradiction 
once $\varepsilon > 0$ is small enough
as in the proof of  Theorem \ref{thm:CommBwShiftNotHc}
by using \eqref{iterate2}.
\end{proof}

The following corollary shows that Theorems \ref{thm:CommBwShiftNotHc} and  \ref{thm:polyBwNonHc} 
also hold on any Banach ideals contained in $\mathscr{K}(\ell^2)$.

\begin{cor}
Let $p(B) \colon \ell^2 \to \ell^2$ be any analytic polynomial in the backward shift $B$.    
Then the commutator map $\Delta_{p(B)}$ is not hypercyclic on any 
Banach ideal $J \subset \mathscr{K}(\ell^2)$.
\end{cor}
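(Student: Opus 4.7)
The plan is to reduce the statement to Theorem \ref{thm:polyBwNonHc} via the comparison principle, in exact analogy with the proof of Corollary \ref{cor:commNrmlNotHcIdeals}. I would take $\Psi \colon J \to \mathscr{K}(\ell^2)$ to be the inclusion map. This map is continuous since axiom (ii) in the definition of a Banach ideal gives $\lVert S \rVert \le \lVert S \rVert_J$ for all $S \in J$, and it trivially intertwines the two copies of $\Delta_{p(B)}$, since $\Delta_{p(B)} \circ \Psi$ and $\Psi \circ \Delta_{p(B)}$ are literally the same map on $J$.

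Next I would verify that $\Psi$ has dense range. Axiom (iv) in the definition of a Banach ideal guarantees that every rank-one operator lies in $J$, hence so does every finite rank operator; since $\ell^2$ has the approximation property, the finite rank operators form a dense subset of $\mathscr{K}(\ell^2)$, so $\Psi(J)$ is dense in $\mathscr{K}(\ell^2)$. Consequently $\Delta_{p(B)} \colon \mathscr{K}(\ell^2) \to \mathscr{K}(\ell^2)$ is a linear quasi-factor of $\Delta_{p(B)} \colon J \to J$ through the commuting diagram
\begin{equation*}
\xymatrix@C+2em@R+0.5em{
J \ar[r]^{\Delta_{p(B)}} \ar[d]_\Psi & J \ar[d]^\Psi \\
\mathscr{K}(\ell^2) \ar[r]^{\Delta_{p(B)}} & \mathscr{K}(\ell^2)
}
\end{equation*}
By the comparison principle, hypercyclicity is preserved under quasi-factors, so if $\Delta_{p(B)}$ were hypercyclic on $J$, it would also be hypercyclic on $\mathscr{K}(\ell^2)$, contradicting Theorem \ref{thm:polyBwNonHc}.

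There is essentially no substantial obstacle; the corollary is a soft transfer result once Theorem \ref{thm:polyBwNonHc} is in hand. The only minor points to check are the continuity and density of $\Psi$, both of which follow directly from the Banach ideal axioms (ii) and (iv) combined with the approximation property of $\ell^2$, and the implicit understanding that hypercyclicity requires separability of $J$ (which is automatic for the usual examples, such as the Schatten classes $C_p$ with $1 \le p < \infty$).
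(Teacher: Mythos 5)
Your proposal is correct and follows exactly the same route as the paper: the inclusion $\Psi \colon J \to \mathscr{K}(\ell^2)$ exhibits $\Delta_{p(B)}$ on $\mathscr{K}(\ell^2)$ as a quasi-factor of $\Delta_{p(B)}$ on $J$, and the comparison principle transfers non-hypercyclicity from Theorem \ref{thm:polyBwNonHc}. The only difference is cosmetic: you spell out the appeal to ideal axioms (ii) and (iv) and the approximation property of $\ell^2$, whereas the paper simply refers back to the identical argument in Corollary \ref{cor:commNrmlNotHcIdeals}.
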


\begin{proof}
The finite rank operators are contained in $J$ and they form a dense subset of $\mathscr{K}(\ell^2)$. Hence the inclusion map $\Psi \colon J \rightarrow \mathscr{K}(\ell^2)$ 
is continuous with dense range, so that $\Delta_{p(B)}$ on $\mathscr{K}(\ell^2)$
is a quasi-factor of $\Delta_{p(B)}$ on $J$
 via the following commuting diagram.
\begin{equation*}
\xymatrix@C+3em@R+0.8em{ 
J \ar[r]^{\Delta_{p(B)}} \ar[d]_\Psi & J \ar[d]^\Psi\\
\mathscr{K}(\ell^2) \ar[r]^{\Delta_{p(B)}} & \mathscr{K}(\ell^2)
}
\end{equation*}
By arguing as in Corollary \ref{cor:commNrmlNotHcIdeals} it follows that $\Delta_{p(B)}$ cannot be hypercyclic on $J$.
\end{proof}

Following from Theorem \ref{thm:CommBwShiftNotHc}, it is natural to consider the case of the bilateral backward shift $B \colon \ell^2(\mathbb{Z}) \to \ell^2(\mathbb{Z})$.  However we know from Remark \ref{rmk:examplesOfNrmlComms} that the induced commutator map $\Delta_{B}$ cannot be  supercyclic on any Banach ideal $J \subseteq C_2$, since $B$ is unitary on $\ell^2(\mathbb{Z})$.  
So in particular  $\Delta_{cB}$ is not hypercyclic on $J$ for any $c \neq 0$.
We note however that the results of Sections \ref{sec:NonHcOpersHilbert} and \ref{sec:commBwNonHc} do not apply to the case $\Delta_{cB} \colon \mathscr{K}\left( \ell^2(\mathbb{Z}) \right) \to \mathscr{K}\left( \ell^2(\mathbb{Z}) \right)$.

Another class of reasonable candidates to give hypercyclic commutator maps are the dual hypercyclic operators.
We recall that $T \in \mathscr{L}(X)$ is  \emph{dual hypercyclic} if both $T$ and its adjoint $T^*$ are hypercyclic and the first examples were obtained	 by Salas~\cite{Sal91,Sal07} and  Petersson~\cite{Pet06}.  

This approach is further supported by the correspondence, established by Bonet et al.~\cite{BMP04}, between the hypercyclic properties of the multipliers $L_T$, $R_T$ and, respectively, the operators $T$ and $T^*$.  However we will see dual hypercyclic operators do not necessarily induce hypercyclic commutator maps.

\begin{eg}
We first observe that there exists a dual hypercyclic operator such that the induced commutator map is not hypercyclic on any separable Banach ideal $J \subseteq \mathscr{K}\left(\ell^2(\mathbb{Z}) \right)$.

We consider the dual hypercyclic operator from \cite{Sal07} which  is of the form $I + T \colon \ell^2(\mathbb{Z}) \to \ell^2(\mathbb{Z})$, where $T$ is a weighted backward shift and $I$ is the identity operator.  Note from \cite[Remark 3.2]{Sal07} that the spectrum $\sigma\left(I + T \right) = \{ 1 \}$. 
If we consider the induced commutator map $\Delta_{I+T}$ on a separable Banach ideal $J \subseteq \mathscr{K}\left( \ell^2(\mathbb{Z}) \right)$, then it follow from   \eqref{eq:specCommI} that its spectrum is $\sigma_J\left( \Delta_{I+T} \right) = \{0 \}$. Since it does not intersect the unit circle it follows by Kitai's condition, (cf. \cite[Theorem 1.18]{BM09}), that it is not hypercyclic.

Next we recall that  the dual hypercyclic operators constructed in \cite{Sal91} and \cite{Pet06}  are invertible weighted backward shifts on, respectively, $\ell^2(\mathbb{Z})$ and $\ell^p(\mathbb{N})$, for $1<p<\infty$.  It is  known from the survey of Shields~\cite[Section 5]{Shi74} that their spectra are annuli centred at the origin and hence the above argument does not apply to these particular operators.
\end{eg}

We conclude by mentioning some natural questions that arise from our study.
If we consider the question of supercyclicity of $\Delta_{cB}$, the argument of Theorem \ref{thm:CommBwShiftNotHc} does not seem applicable.
So one question arising from Theorem \ref{thm:CommBwShiftNotHc} is whether $\Delta_{B} \colon \mathscr{K}(\ell^2) \to \mathscr{K}(\ell^2)$ is  supercyclic?

The main remaining question  is does there exist a separable Banach ideal $J \subset \mathscr{L}(X)$ and 
$T \in \mathscr{L}(X)$ such that the commutator map $\Delta_T \colon J \to J$ is hypercyclic?
Are the dual hypercyclic operators from \cite{Sal91} and \cite{Pet06} good candidates to induce such a hypercyclic commutator map?
It is also possible to consider weaker topologies on $\mathscr{L}(\ell^2)$ and ask whether there exists $T \in \mathscr{L}(\ell^2)$ such that
$\Delta_T$ is hypercyclic on $\mathscr{L}(\ell^2)$ with respect to the strong operator topology.

\bibliographystyle{plain}

\bibliography{/Users/gilmore/Dropbox/Latex/bibliography/elOps,/Users/gilmore/Dropbox/Latex/bibliography/hypercyclic,/Users/gilmore/Dropbox/Latex/bibliography/FAgeneral}

\end{document}